\documentclass[11pt]{article}
\usepackage[margin=1in]{geometry}
\usepackage{amsmath,amssymb,amsthm,mathtools}
\usepackage[hidelinks]{hyperref}
\usepackage{booktabs}
\usepackage{tikz}
\usepackage{enumitem}
\usetikzlibrary{positioning,shapes.geometric,calc}

\definecolor{domcolor}{RGB}{220,50,50}

\tikzset{
  vertex/.style={circle,draw=black,fill=white,minimum size=6pt,inner sep=0pt,line width=0.6pt},
  dom/.style={circle,draw=black,fill=domcolor,minimum size=6pt,inner sep=0pt,line width=0.8pt},
  leaf/.style={circle,draw=black,fill=white,minimum size=5pt,inner sep=0pt,line width=0.5pt},
  edge/.style={line width=0.6pt,black},
}
\newtheorem{theorem}{Theorem}

\newtheorem{corollary}[theorem]{Corollary}

\theoremstyle{definition}
\newtheorem{definition}[theorem]{Definition}
\theoremstyle{remark}
\newtheorem{remark}[theorem]{Remark}

\newcommand{\gam}{\gamma}
\newcommand{\zet}{\zeta}

\title{Four Dominion Growth Regimes in Trees: Forcing, Fibonacci Enumeration, Periodicity, and Stability}
\author{Julian~Allagan$^{1,*}$, Erin~Gray$^{1}$, Jennifer~Sawyer$^{1}$, and Gabrielle Morgan$^{1}$  \\
  \small $^{1}$ Elizabeth City State University, Elizabeth City, NC, USA \\
 \small Department of Mathematics, Computer Science and Engineering Technology\\
  \small $^{*}$\textit{E-mail:} \texttt{adallagan@ecsu.edu}\qquad
  \small $^{*}$Corresponding author
}
\date{}

\begin{document}
\maketitle

\begin{abstract}
We study the \emph{dominion} $\zeta(G)$, defined as the number of minimum dominating sets of a graph $G$,
and analyze how local forcing and boundary effects control the flexibility of optimal domination in trees.
For path-based pendant constructions, we identify a sharp forcing threshold: attaching a single pendant
vertex to each path vertex yields complete independence with $\zeta=2^{\gamma}$, whereas attaching two or
more pendant vertices forces a unique minimum dominating set.
Between these extremes, sparse pendant patterns produce intermediate behavior: removing endpoint pendants
gives $\zeta=2^{\gamma-2}$, while alternating pendant attachments induce Fibonacci growth
$\zeta\asymp \varphi^{\gamma}$, where $\varphi$ is the golden ratio.
For complete binary trees $T_h$, we establish a rigid period-$3$ law $\zeta(T_h)\in\{1,3\}$ despite
exponential growth in $|V(T_h)|$.
We further prove a sharp stability bound under leaf deletions,
$\zeta(T_h-X)\le 2^{m_1(X)}\zeta(T_h)$, where $m_1(X)$ counts parents that lose exactly one child;
in particular, deleting a single leaf preserves the domination number and exactly doubles the dominion.
\end{abstract}

\section{Introduction}

The domination number $\gam(G)$ measures the minimum number of vertices required to dominate a graph $G$,
where a set $D\subseteq V(G)$ is dominating if every vertex lies in $D$ or is adjacent to a vertex of $D$.
While $\gam(G)$ captures optimal cost, it does not distinguish between \emph{rigid} and \emph{flexible}
optima. This distinction is quantified by the \emph{dominion}
\[
\zet(G)=\#\{D\subseteq V(G): |D|=\gam(G)\ \text{and $D$ dominates $G$}\},
\]
which counts all minimum dominating sets of $G$. Introduced in this form by
Allagan and Bobga~\cite{AllaganBobga2021}, the dominion measures the degree of freedom among optimal
solutions and reveals how local structural constraints propagate globally.

Dominating sets arise naturally in network design and control, where vertices represent facilities,
sensors, or agents that must collectively monitor or influence an entire system~\cite{HaynesHedetniemiSlater1998}.
In this context, the dominion $\zet(G)$ quantifies solution diversity and thus robustness:
systems with $\zet(G)>1$ admit alternative optimal configurations under failures or reconfiguration.
Applications include wireless sensor networks~\cite{AnithaSebastian2011,Wu1999,Dai2004}, where connected
dominating sets form efficient routing backbones; Internet of Things architectures~\cite{Bendouda2018,Jia2009},
where domination-based clustering supports adaptive topology control; distributed computing~\cite{Kuhn2004},
where minimal dominating sets serve as leader sets; and network security~\cite{Angel2022,Kang2008}, where
monitor placement mitigates coordinated attacks. Beyond communication networks, facility location and
resource allocation models~\cite{Church1974,Farahani2010} use the multiplicity of optimal placements to
assess resilience under demand fluctuations or site failures. Recent extensions include influence
maximization in social networks~\cite{Li2013} and epidemic control~\cite{Gao2014}, where $\zet(G)$ reflects
the flexibility of intervention strategies.

The enumeration of minimum dominating sets has been studied along several largely independent lines.
Early work focused on exact counts for specific graph families, including paths, cycles, and related
constructions~\cite{AllaganBobga2021}, with later extensions to grid graphs~\cite{SuAllagan2024}.
A complementary extremal direction asks how large $\zet(G)$ can be as a function of $\gam(G)$.
Godbole, Jamieson, and Jamieson~\cite{GodboleJamiesonJamieson2014}, followed by Connolly
\emph{et al.}~\cite{ConnollyGaborGodboleKay2016}, established general exponential upper bounds on the
number of minimum dominating sets.

At the opposite extreme, Goddard and Henning~\cite{GoddardHenning2023} characterized graphs admitting a
\emph{unique} minimum dominating set, identifying structural conditions under which optimal domination
is completely forced. The most definitive recent result for forests is due to Petr, Portier, and
Versteegen~\cite{PetrPortierVersteegen2024}, who proved that every forest $F$ satisfies
$\zet(F)\le 5^{\gam(F)}$ and constructed families achieving growth on the order of
$5^{2\gam(F)/5}$, establishing the correct exponential rate up to constant factors.

The present paper operates in a complementary \emph{structural} regime. Rather than maximizing $\zet(G)$
over all trees with fixed $\gam(G)$, we determine exact values of $\gam$ and $\zet$ for canonical tree
families and isolate the local forcing mechanisms responsible for distinct growth behaviors.
Specifically, we identify four qualitatively different regimes:
\begin{enumerate}[label=(\roman*),leftmargin=*]
\item \emph{Exponential freedom}, $\zet=2^{\gam}$, arising from independent local choices;
\item \emph{Fibonacci growth}, $\zet\asymp \varphi^{\gam}$, induced by coupled constraints;
\item \emph{Periodic rigidity}, where $\zet\in\{1,3\}$ for complete binary trees despite exponential
vertex growth; and
\item \emph{Complete forcing}, where local density collapses the dominion to $\zet=1$.
\end{enumerate}
In addition, we quantify stability under boundary perturbations, showing that deleting a single leaf
from a complete binary tree preserves the domination number while exactly doubling the dominion, and
more generally proving bounds of the form
$\zet(T_h-X)\le 2^{m_1(X)}\zet(T_h)$, where $m_1(X)$ counts parents losing exactly one child.

\begin{remark}[Structural versus extremal regimes]
The extremal bounds of Petr, Portier, and Versteegen~\cite{PetrPortierVersteegen2024}, together with earlier
results~\cite{GodboleJamiesonJamieson2014,ConnollyGaborGodboleKay2016}, show that the dominion of forests
can grow exponentially with base at most $5$ in the domination number. Our results demonstrate that many
natural tree families lie far below this envelope due to strong local forcing and boundary effects.
In particular, complete binary trees satisfy $\zet(T_h)\in\{1,3\}$ for all $h$, even though $|V(T_h)|$
grows exponentially. The stability results proved here describe precisely how and when such rigid behavior
breaks under minimal perturbations, providing a structural counterpart to the global extremal theory.
\end{remark}

\begin{definition}
A set $D\subseteq V(G)$ is \emph{dominating} if every vertex lies in $D$ or has a neighbor in $D$.
The \emph{domination number} $\gam(G)$ is the minimum size of a dominating set.
The \emph{dominion} $\zet(G)$ is the number of dominating sets of size $\gam(G)$.
\end{definition}

Throughout, $P_n$ denotes the path on vertices $v_1,\dots,v_n$ with edges $v_iv_{i+1}$ $(1\le i\le n-1)$.
For $r\ge 1$ and $1\le i\le n$, let
\[
L_i=\{\ell_{i,1},\dots,\ell_{i,r}\}
\]
denote the set of $r$ pendant vertices attached to $v_i$ in the uniform attachment model.
We write $(F_t)_{t\ge 1}$ for the Fibonacci numbers, defined by $F_1=F_2=1$ and
$F_{t+1}=F_t+F_{t-1}$.

\section{Pendant path structures}
We first analyze path-based trees with pendant attachments. In these families, domination is governed by
local two-vertex (or $(r+1)$-vertex) clusters, so the transition between flexibility and forcing can be
read off directly from the attachment pattern.

\subsection{Uniform attachment: a forcing dichotomy}
\begin{theorem}[Forcing dichotomy for paths with pendant vertices]\label{thm:uniform}
Fix integers $n\ge 1$ and $r\ge 1$. Let $G(n,r)$ be the graph obtained from $P_n$ by attaching $r$
pendant vertices $\ell_{i,1},\dots,\ell_{i,r}$ to each $v_i$. Then
\[
\gamma(G(n,r))=n,
\qquad
\zeta(G(n,r))=
\begin{cases}
2^n, & r=1,\\[2pt]
1, & r\ge 2.
\end{cases}
\]
In particular, if $r\ge 2$ the unique minimum dominating set is $\{v_1,\dots,v_n\}$, whereas if $r=1$
every minimum dominating set is obtained by choosing one vertex from each pair
$\{v_i,\ell_{i,1}\}$ independently.
\end{theorem}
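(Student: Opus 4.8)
The plan is to reduce everything to a local analysis on the vertex blocks $B_i := \{v_i\}\cup L_i$ for $i=1,\dots,n$, which partition $V(G(n,r))$. The first step is the domination number. For the upper bound, the set $\{v_1,\dots,v_n\}$ is dominating, since $v_i$ dominates itself, its path-neighbours, and all of $L_i$; hence $\gamma(G(n,r))\le n$. For the matching lower bound, fix any dominating set $D$ and note that the pendant $\ell_{i,1}$ has $N[\ell_{i,1}]=\{\ell_{i,1},v_i\}\subseteq B_i$, so $D\cap B_i\neq\varnothing$ for every $i$; since the $B_i$ are pairwise disjoint, $|D|\ge n$. Thus $\gamma(G(n,r))=n$, and — crucially — every \emph{minimum} dominating set $D$ satisfies $|D\cap B_i|=1$ for all $i$, because $\sum_{i=1}^{n}|D\cap B_i|=|D|=n$ forces equality in each of the $n$ inequalities $|D\cap B_i|\ge 1$. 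So a minimum dominating set is precisely a transversal of the blocks that happens to dominate $G(n,r)$.

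Next I would handle the forcing case $r\ge 2$. Let $D$ be a minimum dominating set, fix $i$, and write $D\cap B_i=\{x\}$. If $x=\ell_{i,j_0}$ were a pendant, then any other pendant $\ell_{i,j}$ with $j\neq j_0$ (which exists since $r\ge 2$) has its unique neighbour $v_i\notin D$ and satisfies $\ell_{i,j}\notin D$, contradicting that $D$ dominates. Hence $x=v_i$; as $i$ was arbitrary, $D=\{v_1,\dots,v_n\}$, and since this set is dominating (first step) it is the unique minimum dominating set, so $\zeta(G(n,r))=1$.

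For the independence case $r=1$ the block is just the pair $B_i=\{v_i,\ell_{i,1}\}$, and I would show every one of the $2^n$ transversals $D$ (one vertex from each pair) is a minimum dominating set. Its size is $n=\gamma$ by construction, so it suffices to check domination. Each pendant $\ell_{i,1}$ is dominated because $D\cap\{v_i,\ell_{i,1}\}\neq\varnothing$: if $\ell_{i,1}\in D$ it dominates itself, and if $v_i\in D$ it dominates $\ell_{i,1}$. Each path vertex $v_i$ is dominated from \emph{within its own block} by the same dichotomy (either $v_i\in D$, or $\ell_{i,1}\in D$ and $v_i\sim\ell_{i,1}$), so the path edges $v_iv_{i+1}$ impose no constraint whatsoever. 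Therefore all $2^n$ transversals dominate; combined with the structural fact from the first step that every minimum dominating set is such a transversal, this gives $\zeta(G(n,1))=2^n$, together with the stated description of the minimum dominating sets.

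The argument is short, and the only place that needs genuine care is the exactness of the counting step: one must observe that the per-block bound $|D\cap B_i|\ge 1$ is driven by a \emph{single} pendant leaf (so it survives for every $r\ge 1$), and that summing these bounds is tight exactly for minimum dominating sets. This is what both collapses the flexibility to $\zeta=1$ when a second pendant can ``catch'' any bad transversal, and makes the $r=1$ choices genuinely independent, since nothing outside $B_i$ is ever needed to dominate $v_i$. I expect no real obstacle beyond checking the degenerate case $n=1$ (the star $K_{1,r}$), where the same reasoning applies verbatim.
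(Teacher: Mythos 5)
Your proposal is correct and follows essentially the same route as the paper: the same partition into disjoint clusters $\{v_i\}\cup L_i$ forcing $\gamma=n$ and $|D\cap B_i|=1$, the same local contradiction for $r\ge 2$ (a second pendant left undominated when a pendant is chosen), and the same verification that all $2^n$ transversals dominate when $r=1$. No gaps.
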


\begin{proof}
For each $i$, every pendant vertex $\ell\in L_i$ satisfies $N[\ell]=\{\ell,v_i\}$.
Consequently, any dominating set $D$ must meet the local cluster
\[
\{v_i\}\cup L_i \qquad (1\le i\le n).
\]
Since these $n$ clusters are pairwise disjoint, every dominating set has size at least $n$.
As the set $\{v_1,\dots,v_n\}$ dominates $G(n,r)$, it follows that $\gamma(G(n,r))=n$.

Let $D$ be a minimum dominating set, so $|D|=n$.
The disjointness of the clusters then forces
\[
|D\cap(\{v_i\}\cup L_i)|=1 \qquad (1\le i\le n).
\]

Assume first that $r\ge 2$.
If $v_i\notin D$, then none of the $r$ pendant vertices in $L_i$ is adjacent to any vertex outside
$\{v_i\}\cup L_i$, so each such pendant vertex must dominate itself.
This forces $L_i\subseteq D$, contradicting the condition above.
Hence $v_i\in D$ for all $i$, and the unique minimum dominating set is
$\{v_1,\dots,v_n\}$, giving $\zeta(G(n,r))=1$.

Now suppose $r=1$, and write $L_i=\{\ell_i\}$.
The condition above implies that $D$ chooses exactly one vertex from each pair
$\{v_i,\ell_i\}$.
Conversely, any such choice dominates $G(n,1)$, since the selected vertex dominates both $v_i$ and $\ell_i$,
and every path vertex is dominated either by itself or by its pendant.
There are $2^n$ independent choices, so $\zeta(G(n,1))=2^n=2^{\gamma(G(n,1))}$.
\end{proof}

This dichotomy is illustrated in Figure~\ref{fig:dominion-behaviors}(a,b) for $n=4$.
The next two corollaries are immediate specializations.

\begin{corollary}[Stars]\label{cor:stars}
For the star $K_{1,m}$ ($m\ge 1$),
\[
\gam(K_{1,m})=1,\qquad
\zet(K_{1,m})=
\begin{cases}
2,& m=1,\\
1,& m\ge 2.
\end{cases}
\]
\end{corollary}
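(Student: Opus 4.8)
The plan is to derive Corollary~\ref{cor:stars} as the special case $n=1$ of Theorem~\ref{thm:uniform}. Observe that $G(1,m)$ is precisely the star $K_{1,m}$: starting from the single-vertex path $P_1=\{v_1\}$ and attaching $m$ pendant vertices $\ell_{1,1},\dots,\ell_{1,m}$ to $v_1$ yields exactly a central vertex adjacent to $m$ leaves. Hence $K_{1,m}=G(1,m)$, and Theorem~\ref{thm:uniform} with $n=1$ and $r=m$ applies verbatim.

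Substituting these parameters, the theorem gives $\gam(K_{1,m})=\gam(G(1,m))=1$ for every $m\ge 1$, and
\[
\zet(K_{1,m})=\zet(G(1,m))=
\begin{cases}
2^{1}=2, & m=1,\\
1, & m\ge 2,
\end{cases}
\]
which is exactly the claimed formula. One may optionally add a one-line sanity check: $K_{1,1}=P_2$ has the two minimum dominating sets $\{v_1\}$ and $\{\ell_{1,1}\}$, while for $m\ge 2$ no single leaf dominates the other leaves, so the center must be chosen, forcing $D=\{v_1\}$.

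There is essentially no obstacle here: the only thing to verify is the identification $K_{1,m}=G(1,m)$, which is immediate from the definition of the uniform attachment model once one notes that $P_1$ is the trivial path on the single vertex $v_1$ with no edges. Everything else is a direct substitution into the already-proved theorem, so the corollary requires no independent argument.
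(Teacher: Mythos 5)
Your proposal is correct, and it takes a slightly different route from the paper's written proof. The paper proves Corollary~\ref{cor:stars} directly in two sentences: for $m=1$, $K_{1,1}\cong P_2$ and both vertices dominate; for $m\ge 2$, the center is the unique vertex whose closed neighborhood is all of $V(K_{1,m})$. You instead identify $K_{1,m}$ with $G(1,m)$ and substitute $n=1$, $r=m$ into Theorem~\ref{thm:uniform}; this is legitimate since the theorem is stated for all $n\ge 1$ and its proof goes through verbatim when the path is a single vertex. Your route is arguably the one the paper's own prose intends (it introduces both corollaries as ``immediate specializations,'' and indeed proves Corollary~3 exactly this way), and it has the small advantage of making the star case visibly an instance of the general forcing dichotomy rather than an isolated check; the paper's direct argument has the advantage of not depending on the (admittedly already proved) theorem. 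Either way the content is the same and no gap is present.
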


\begin{proof}
If $m=1$ then $K_{1,1}\cong P_2$ and both vertices dominate. If $m\ge 2$, the center is the unique vertex
whose closed neighborhood equals $V(K_{1,m})$.
\end{proof}

\begin{corollary}[Full comb]\label{cor:comb}
Let $G_n$ denote the $n$-comb, the graph obtained from $P_n$ by attaching one pendant vertex to each $v_i$.
Then $\gam(G_n)=n$ and $\zet(G_n)=2^n=2^{\gam(G_n)}$.
\end{corollary}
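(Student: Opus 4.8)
The plan is to recognize that the $n$-comb $G_n$ is literally the graph $G(n,1)$ appearing in Theorem~\ref{thm:uniform}: attaching exactly one pendant vertex to each path vertex $v_i$ is precisely the uniform attachment model with $r=1$ and $L_i=\{\ell_{i,1}\}$. Consequently the corollary is nothing more than the $r=1$ branch of that theorem, which already yields $\gamma(G(n,1))=n$ and $\zeta(G(n,1))=2^n=2^{\gamma}$. So the first --- and essentially only --- step is to make this identification explicit and quote Theorem~\ref{thm:uniform}.

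If a self-contained argument is preferred, I would simply repeat the two short observations underlying the $r=1$ case. For the domination number: each pendant $\ell_i$ has $N[\ell_i]=\{\ell_i,v_i\}$, so every dominating set meets the pair $\{v_i,\ell_i\}$; the $n$ pairs are pairwise disjoint, giving $\gamma(G_n)\ge n$, while $\{v_1,\dots,v_n\}$ is dominating, so $\gamma(G_n)=n$. For the dominion: a dominating set of size $n$ must contain exactly one vertex of each pair $\{v_i,\ell_i\}$, and conversely any such choice dominates $G_n$, because the chosen vertex covers both members of its pair and hence every path vertex is covered either by itself or by its pendant. The $2^n$ independent binary choices then give $\zeta(G_n)=2^n=2^{\gamma(G_n)}$.

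I do not anticipate any genuine obstacle here: the statement is an immediate specialization of an already-proved theorem, and the only care required is the bookkeeping check that the comb's pendant labeling coincides with the $r=1$ instance of $L_i$ used in Theorem~\ref{thm:uniform}.
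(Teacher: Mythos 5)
Your proposal is correct and matches the paper exactly: the paper's entire proof of this corollary is the one-line observation that it is Theorem~\ref{thm:uniform} with $r=1$. The additional self-contained argument you sketch is just a faithful restatement of the $r=1$ branch of that theorem's proof, so nothing is missing or different.
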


\begin{proof}
This is Theorem~\ref{thm:uniform} with $r=1$.
\end{proof}
See Figure~\ref{fig:dominion-behaviors}(a) for three example minimum dominating sets of $G_4$.

\subsection{Interior pendants}
The uniform model isolates a density threshold. The next family shows that even at density $1$, boundary
effects can eliminate two degrees of freedom.

\begin{theorem}[Interior pendants]\label{thm:modified}
Let $G'_n$ be obtained from $P_n$ by attaching one pendant vertex $\ell_i$ to each internal vertex $v_i$
for $2\le i\le n-1$. Then
\[
\gam(G'_n)=\max\{1,n-2\},\qquad
\zet(G'_n)=
\begin{cases}
2,& n=2,\\
1,& n=3,\\
2^{\gam(G'_n)-2},& n\ge 4.
\end{cases}
\]
\end{theorem}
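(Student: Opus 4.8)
The proof follows the template of Theorem~\ref{thm:uniform}: locate disjoint forcing clusters to pin down $\gam$, then enumerate the minimum dominating sets directly. The cases $n=2$ and $n=3$ are degenerate and I would dispose of them first: $G'_2\cong P_2$ gives $\gam=1$, $\zet=2$, while $G'_3\cong K_{1,3}$ with centre $v_2$, so Corollary~\ref{cor:stars} gives $\gam=1$, $\zet=1$. Assume $n\ge 4$ henceforth.

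For each internal index $2\le i\le n-1$ put $C_i=\{v_i,\ell_i\}$. Since $N[\ell_i]=\{\ell_i,v_i\}$, every dominating set meets $C_i$, and the $n-2$ clusters are pairwise disjoint, so $\gam(G'_n)\ge n-2$; the set $\{v_2,\dots,v_{n-1}\}$ dominates (it covers every pendant and every internal path vertex directly, and $v_1,v_n$ through $v_2,v_{n-1}$) and has size $n-2$, whence $\gam(G'_n)=n-2=\max\{1,n-2\}$.

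Now let $D$ be a minimum dominating set, $|D|=n-2$. As $D$ meets each of the $n-2$ disjoint clusters, it meets each in exactly one vertex and contains nothing else; in particular $v_1,v_n\notin D$, so domination of $v_1$ forces $v_2\in D$ and domination of $v_n$ forces $v_{n-1}\in D$. Conversely, I would verify that any choice selecting $v_2$, $v_{n-1}$, and exactly one element of each $C_i$ for $3\le i\le n-2$ is dominating: every $\ell_i$ and every $v_i$ is covered whichever element of $C_i$ is taken (if $\ell_i\in D$ it still dominates $v_i$), and $v_1,v_n$ are covered by $v_2,v_{n-1}$. Hence the minimum dominating sets correspond bijectively to independent binary choices over the $n-4$ indices $i\in\{3,\dots,n-2\}$, giving $\zet(G'_n)=2^{n-4}=2^{\gam(G'_n)-2}$.

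The only delicate point is the converse in the last step: once the boundary-forced memberships $v_2,v_{n-1}$ are fixed, the interior selections must be genuinely unconstrained, and this rests on the observation that taking a pendant $\ell_i$ instead of $v_i$ costs nothing, since $\ell_i$ still covers $v_i$ while $v_i$'s path-neighbours lie in their own clusters (or are $v_1,v_n$, already covered). The remaining work is routine, the main thing to state carefully being the index count: $\{3,\dots,n-2\}$ is empty precisely when $n=4$, matching $\zet=1$ there.
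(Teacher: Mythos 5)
Your proposal is correct and follows essentially the same route as the paper: disjoint cluster forcing to get $\gam(G'_n)=n-2$, boundary forcing of $v_2$ and $v_{n-1}$ via the undominated endpoints $v_1,v_n$, and free binary choices on the $n-4$ interior clusters. The only cosmetic difference is that you derive $v_2\in D$ from $D\subseteq\bigcup_i C_i$ (so $v_1\notin D$ must be dominated by its unique neighbour), whereas the paper phrases the same step as a contradiction with $|D|=n-2$; both are sound.
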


\begin{proof}
For $2\le i\le n-1$, set $C_i=\{v_i,\ell_i\}$. Since $N[\ell_i]=C_i$, every dominating set meets each $C_i$,
so $|D|\ge n-2$ for $n\ge 3$. As $\{v_2,\dots,v_{n-1}\}$ dominates $G'_n$, we obtain
$\gam(G'_n)=\max\{1,n-2\}$. The cases $n=2$ ($P_2$) and $n=3$ are immediate.

Assume $n\ge 4$ and let $D$ be a $\gam$-set, so $|D|=n-2$. Cluster forcing gives $|D\cap C_i|=1$ for all
$2\le i\le n-1$. If $\ell_2\in D$, then $v_1$ is not dominated by $\ell_2$ and must be dominated by an
additional vertex (necessarily $v_1$), contradicting $|D|=n-2$; hence $v_2\in D$. Similarly $v_{n-1}\in D$.
Therefore every $\gam$-set has the form
\[
D=\{v_2,v_{n-1}\}\cup \{x_i: 3\le i\le n-2\},\qquad x_i\in\{v_i,\ell_i\}.
\]
Each such choice dominates $G'_n$ because $v_2$ dominates $v_1,v_2,v_3$ and $\ell_2$, $v_{n-1}$ dominates
$v_{n-2},v_{n-1},v_n$ and $\ell_{n-1}$, and for $3\le i\le n-2$ the choice $x_i$ dominates $v_i$ and $\ell_i$.
Thus $\zet(G'_n)=2^{n-4}=2^{\gam(G'_n)-2}$.
\end{proof}

% ------------------------------------------------------------

\subsection{Alternating combs and Fibonacci dominion}

We now consider a sparse attachment pattern in which local choices interact across adjacent clusters.
This coupling yields a Fibonacci recurrence and subexponential growth in $\gamma$.

\begin{definition}
For $n\ge 2$, let $E_n$ (resp.\ $O_n$) be the graph obtained from $P_n$ by attaching one pendant vertex
$\ell_i$ to each even (resp.\ odd) vertex $v_i$.
\end{definition}

\begin{theorem}[Fibonacci dominion for alternating combs]\label{thm:alt}
Let $n\ge 2$ and set $k=\lfloor n/2\rfloor$. Then
\[
\gamma(E_n)=k,\qquad
\zeta(E_n)=
\begin{cases}
F_{k+1},& n=2k,\\
F_k,& n=2k+1,
\end{cases}
\]
and
\[
\gamma(O_n)=\lceil n/2\rceil,\qquad
\zeta(O_n)=
\begin{cases}
F_{k+1},& n=2k,\\
F_{k+3},& n=2k+1,
\end{cases}
\]
where $(F_t)_{t\ge1}$ denotes the Fibonacci sequence defined by $F_1=F_2=1$ and
$F_{t+1}=F_t+F_{t-1}$.
\end{theorem}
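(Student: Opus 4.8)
The plan is to combine the cluster-forcing argument from the proofs of Theorems~\ref{thm:uniform} and~\ref{thm:modified} with an explicit encoding of the minimum dominating sets by binary strings, after which the count becomes the classical Fibonacci enumeration of binary strings with no two consecutive zeros. The domination numbers come first and quickly: in $E_n$ the pendant $\ell_i$ on an even vertex $v_i$ has $N[\ell_i]=\{v_i,\ell_i\}$, so the $k$ pairwise disjoint clusters $C_i=\{v_i,\ell_i\}$ ($i$ even) force $\gamma(E_n)\ge k$, and since $\{v_2,v_4,\dots,v_{2k}\}$ dominates $E_n$ (each odd $v_{2j-1}$ is adjacent to some $v_{2j}$, and each path-endpoint to its even neighbour) we get $\gamma(E_n)=k$. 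The same argument applied to the odd indices gives $\gamma(O_n)=\lceil n/2\rceil$. I will carry out the dominion count for $E_n$ in detail; that for $O_n$ is the mirror image.

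Because the clusters are pairwise disjoint and number exactly $\gamma$, a $\gamma$-set $D$ must consist of one vertex $x_i\in C_i$ per cluster and nothing more. Encode $D$ by one bit per cluster, equal to $1$ when $x_i=v_i$ and $0$ when $x_i=\ell_i$. Either choice dominates $v_i$ and $\ell_i$; the pendant choice dominates nothing else, whereas the path choice also dominates the path-neighbours of $v_i$. Hence $D$ dominates iff every path-vertex carrying no pendant is dominated by a neighbour. In $E_n$ these are the odd vertices; writing the bits for the clusters at $v_2,\dots,v_{2k}$ as $a_1\cdots a_k$, the vertex $v_{2j-1}$ (neighbours among $v_{2j-2},v_{2j}$) is dominated iff $a_{j-1}=1$ or $a_j=1$, so the condition is: no two consecutive zeros, together with $a_1=1$ (forced by the endpoint $v_1$, whose only neighbour is $v_2$) and, when $n=2k+1$, also $a_k=1$ (forced by $v_{2k+1}$). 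In $O_n$ the pendant-free vertices are the even $v_{2j}$, with neighbours $v_{2j-1},v_{2j+1}$; writing the bits for the odd clusters as $b_1b_2\cdots$, the condition is again no two consecutive zeros, with the additional requirement that the last bit equal $1$ when $n=2k$ (since then $v_{2k}$ has only $v_{2k-1}$ as a neighbour), and no additional requirement when $n=2k+1$.

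It remains to count binary strings. I will use the elementary fact, immediate from the Fibonacci recurrence by conditioning on the first one or two bits (the small bases checked by hand), that the number of length-$m$ binary strings with no two consecutive zeros is $F_{m+2}$; with the first bit fixed to $1$ it is $F_{m+1}$; and with both the first and last bits fixed to $1$ it is $F_m$. Applying this to the conditions above: $\zeta(E_{2k})$ counts length-$k$ strings with no two consecutive zeros and $a_1=1$, hence $F_{k+1}$; $\zeta(E_{2k+1})$ counts length-$k$ such strings with $a_1=a_k=1$, hence $F_k$; $\zeta(O_{2k})$ counts length-$k$ such strings with the last bit $1$, which by reversal equals $F_{k+1}$; and $\zeta(O_{2k+1})$ counts length-$(k+1)$ such strings with no boundary constraint, hence $F_{k+3}$. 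These are the claimed values, and the degenerate cases $n\in\{2,3\}$ ($E_n$ a path or a star, $O_n$ a path or a path with two end-pendants) are subsumed by the same framework.

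The step I expect to be the main obstacle is the boundary and parity bookkeeping in the second paragraph: one must pin down exactly which path-endpoints have a single neighbour and so rigidly force the first or last bit --- in $E_n$ this is $v_1$ always and $v_{2k+1}$ only when $n$ is odd, while in $O_n$ it is $v_{2k}$ and only when $n$ is even. This asymmetry between the two families, and between the two parities of $n$, is precisely what produces the four distinct Fibonacci indices, so matching each boundary condition to the correct index of $F$ is the delicate point; everything else is cluster forcing and a one-line induction.
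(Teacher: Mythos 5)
Your proof is correct, and it takes a genuinely different route from the paper's. The paper works directly with a recurrence on the graphs themselves: it defines truncated subgraphs $H_t$ and $J_t$, cases on whether the last cluster is hit at its host or at its pendant, and derives $a_t=a_{t-1}+a_{t-2}$ (and likewise $b_t$) with hand-checked base cases, treating the four parity/family combinations separately. You instead establish a single explicit bijection between $\gamma$-sets and binary strings, show that the domination condition is exactly ``no two consecutive zeros'' with boundary bits forced by the degree-one path endpoints lacking pendants, and then invoke the classical count $F_{m+2}$ of such strings (with $F_{m+1}$ resp.\ $F_m$ when one resp.\ both end bits are pinned to $1$). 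The underlying combinatorics is the same Fibonacci conditioning, but your encoding buys a complete structural description of \emph{all} minimum dominating sets at once and makes the four Fibonacci indices fall out of a uniform boundary-condition bookkeeping rather than four separate recursions; the paper's subgraph recurrence, in exchange, is the template that transfers more directly to settings (such as the binary-tree reduction in Section~3) where the set of optima does not factor into a clean product-with-local-constraints structure. I verified your boundary analysis in detail ($a_1=1$ always for $E_n$ and additionally $a_k=1$ when $n$ is odd; last bit $1$ for $O_{2k}$ and no constraint for $O_{2k+1}$), and it matches the paper's values, including the small cases $n\in\{2,3\}$.
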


\begin{proof}
We treat the even and odd attachment patterns separately.

\smallskip
\noindent\emph{Domination numbers.}
In $E_n$, each even index $2i$ contributes a pendant cluster
$C_{2i}=\{v_{2i},\ell_{2i}\}$ with $N[\ell_{2i}]=C_{2i}$.
Every dominating set must intersect each $C_{2i}$, and the clusters are pairwise disjoint.
Hence $\gamma(E_n)\ge k=\lfloor n/2\rfloor$.
Selecting all even hosts $\{v_2,v_4,\dots\}$ dominates $E_n$, so $\gamma(E_n)=k$.
The same argument, applied to odd indices, yields $\gamma(O_n)=\lceil n/2\rceil$.

\smallskip
\noindent\emph{Even attachment.}
For $t\ge 1$, let $H_t$ be the induced subgraph of $E_{2t}$ on path vertices
$v_1,\dots,v_{2t}$ together with pendants $\ell_2,\ell_4,\dots,\ell_{2t}$,
and write $a_t=\zeta(H_t)$.
Since $H_t$ contains exactly $t$ disjoint clusters $C_{2i}=\{v_{2i},\ell_{2i}\}$ and
$\gamma(H_t)=t$, every minimum dominating set intersects each cluster in exactly one vertex.

The initial values follow by inspection.
For $t=1$, $H_1$ is the path $v_1v_2$ with a pendant at $v_2$; the unique minimum
dominating set is $\{v_2\}$, so $a_1=1$.
For $t=2$, $H_2$ is the path $v_1v_2v_3v_4$ with pendants at $v_2$ and $v_4$.
Exactly two minimum dominating sets exist, namely $\{v_2,v_4\}$ and $\{v_2,\ell_4\}$,
so $a_2=2$.

Now fix $t\ge 3$ and let $D$ be a minimum dominating set of $H_t$.
Consider the final cluster $C_{2t}=\{v_{2t},\ell_{2t}\}$.

\emph{Case~1: $v_{2t}\in D$.}
Removing the vertices $\{v_{2t-1},v_{2t},\ell_{2t}\}$ and restricting $D$ to the
remaining induced subgraph yields a minimum dominating set of $H_{t-1}$.
Conversely, any minimum dominating set of $H_{t-1}$ extends uniquely to one of $H_t$
by adding $v_{2t}$.
Thus this case contributes $a_{t-1}$ choices.

\emph{Case~2: $\ell_{2t}\in D$.}
Then $v_{2t}\notin D$, and $v_{2t-1}$ must be dominated by a neighbor in $D$.
Since $v_{2t-1}$ is adjacent only to $v_{2t-2}$ and $v_{2t}$, it follows that
$v_{2t-2}\in D$ is forced.
Removing the last two even clusters and restricting to the induced subgraph on
$v_1,\dots,v_{2t-4}$ yields a minimum dominating set of $H_{t-2}$, and this correspondence
is bijective.
Hence this case contributes $a_{t-2}$ choices.

Combining the two cases gives the recurrence $a_t=a_{t-1}+a_{t-2}$ for $t\ge 3$,
with $a_1=1$ and $a_2=2$, and therefore $a_t=F_{t+1}$.
If $n=2k$, then $E_n=H_k$ and $\zeta(E_{2k})=F_{k+1}$.
If $n=2k+1$, the endpoint $v_{2k+1}$ has no pendant and forces $v_{2k}$ in every
minimum dominating set, reducing to $H_{k-1}$ and yielding $\zeta(E_{2k+1})=F_k$.

\smallskip
\noindent\emph{Odd attachment.}
For $t\ge 0$, let $J_t$ be the induced subgraph of $O_{2t+1}$ on vertices
$v_1,\dots,v_{2t+1}$ together with pendants $\ell_1,\ell_3,\dots,\ell_{2t+1}$,
and write $b_t=\zeta(J_t)$.
The graph $J_t$ contains $t+1$ disjoint clusters
$\{v_{2i-1},\ell_{2i-1}\}$, so $\gamma(J_t)=t+1$.

For $t=0$, $J_0$ consists of $v_1$ with a pendant $\ell_1$, and the unique minimum
dominating set is $\{v_1\}$; thus $b_0=1$.
For $t=1$, $J_1$ is the path $v_1v_2v_3$ with pendants at $v_1$ and $v_3$.
Exactly three minimum dominating sets exist, namely
$\{v_1,v_3\}$, $\{v_1,\ell_3\}$, and $\{\ell_1,v_3\}$, so $b_1=3$.

Fix $t\ge 2$ and let $D$ be a minimum dominating set of $J_t$.
Consider the final cluster $\{v_{2t+1},\ell_{2t+1}\}$.

\emph{Case~1: $v_{2t+1}\in D$.}
Removing $\{v_{2t},v_{2t+1},\ell_{2t+1}\}$ and restricting $D$ yields a minimum
dominating set of $J_{t-1}$, and this operation is reversible.
Hence this case contributes $b_{t-1}$ choices.

\emph{Case~2: $\ell_{2t+1}\in D$.}
Then $v_{2t+1}\notin D$, and domination of $v_{2t}$ forces $v_{2t-1}\in D$.
After fixing these vertices, the remaining choices lie in the induced subgraph
isomorphic to $J_{t-2}$, again bijectively.
Thus this case contributes $b_{t-2}$ choices.

Consequently $b_t=b_{t-1}+b_{t-2}$ for $t\ge 2$, with $b_0=1$ and $b_1=3$,
so $b_t=F_{t+3}$.
If $n=2k+1$, then $O_n=J_k$ and $\zeta(O_{2k+1})=F_{k+3}$.
If $n=2k$, the unpended endpoint $v_{2k}$ forces $v_{2k-1}$ in every minimum
dominating set, reducing to $J_{k-1}$ and giving $\zeta(O_{2k})=F_{k+1}$.
\end{proof}

Table~\ref{tab:verification} lists the values of $\gam$ and $\zet$ for $2\le n\le 10$,
confirming the Fibonacci indices in Theorem~\ref{thm:alt} for both even and odd attachment.

\begin{corollary}[Asymptotic behavior of alternating combs]\label{cor:phi}
For alternating combs,
\[
\zeta(E_n)\asymp \varphi^{\,\gamma(E_n)}
\quad\text{and}\quad
\zeta(O_n)\asymp \varphi^{\,\gamma(O_n)},
\]
where $\varphi=(1+\sqrt5)/2$ is the golden ratio.
\end{corollary}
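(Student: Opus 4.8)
The plan is to combine the exact evaluations of $\zeta$ in Theorem~\ref{thm:alt} with Binet's formula $F_t=(\varphi^t-\psi^t)/\sqrt5$, where $\psi=(1-\sqrt5)/2=-\varphi^{-1}$. Since $|\psi|<1$, the term $\psi^t$ is bounded, so $F_t=\Theta(\varphi^t)$; explicitly, there exist absolute constants $0<c_1\le c_2$ with $c_1\varphi^t\le F_t\le c_2\varphi^t$ for every $t\ge1$. Granting this, the only remaining task is to compare, in each of the four cases of Theorem~\ref{thm:alt}, the Fibonacci index with the corresponding domination number, and to check that the two differ by a bounded amount.

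First I would handle $E_n$. For $n=2k$ we have $\gamma(E_n)=k$ and $\zeta(E_n)=F_{k+1}$, so $\zeta(E_n)=\Theta(\varphi^{k+1})=\Theta(\varphi\cdot\varphi^{\gamma(E_n)})$; for $n=2k+1$ we have $\gamma(E_n)=k$ and $\zeta(E_n)=F_k=\Theta(\varphi^{\gamma(E_n)})$. In both parities the Fibonacci index exceeds $\gamma(E_n)$ by at most $1$, and a fixed power of $\varphi$ can be absorbed into the implied constants, giving $\zeta(E_n)\asymp\varphi^{\gamma(E_n)}$.

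Next I would do the same for $O_n$. If $n=2k$ then $\gamma(O_n)=k$ and $\zeta(O_n)=F_{k+1}=\Theta(\varphi^{\gamma(O_n)})$; if $n=2k+1$ then $\gamma(O_n)=k+1$ and $\zeta(O_n)=F_{k+3}=\Theta(\varphi^{k+3})=\Theta(\varphi^{2}\cdot\varphi^{\gamma(O_n)})$. Once more the offset between the Fibonacci index and $\gamma(O_n)$ lies in $\{1,2\}$, hence is bounded, so it disappears into the constants of $\asymp$.

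I do not expect any genuine obstacle: the result is essentially a restatement of Binet's formula. The only point requiring a moment's care is that the constants hidden in $\asymp$ must be taken uniformly across both parity classes of $n$, separately for $E_n$ and for $O_n$; this is automatic because only finitely many index offsets occur. If fully explicit constants were desired, one could verify the small cases by hand and record, say, $\varphi^{\gamma}/5\le\zeta\le\varphi^{\gamma+2}$, but the asymptotic equivalence needs nothing beyond the boundedness of these offsets.
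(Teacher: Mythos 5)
Your proposal is correct and follows essentially the same route as the paper: invoke Binet's formula to get $F_t=\Theta(\varphi^t)$ and then observe that in each parity case the Fibonacci index from Theorem~\ref{thm:alt} differs from the relevant domination number by a bounded offset, which is absorbed into the constants of $\asymp$. You merely spell out the four-case index comparison that the paper leaves implicit.
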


\begin{proof}
Binet's formula gives $F_t=(\varphi^t-\psi^t)/\sqrt5$ with $\psi=(1-\sqrt5)/2$ and $|\psi|<1$; see
\cite[Ch.~6]{GrahamKnuthPatashnik1994}. Hence $F_t=\Theta(\varphi^t)$, and the claimed asymptotics follow
from Theorem~\ref{thm:alt}.
\end{proof}

\section{Complete binary trees and stability under leaf deletions}\label{sec:binary}
The path-based families in Section~2 exhibit three attachment-driven behaviors:
independent choice, boundary-induced rigidity, and Fibonacci coupling.
We now turn to a genuinely hierarchical family.
Complete binary trees display a qualitatively different phenomenon:
a rigid periodic law for $\zet$ that persists despite exponential growth in $|V|$.
We then quantify how this rigidity degrades under controlled boundary perturbations.

\subsection{Period-$3$ dominion in complete binary trees}
Let $T_h$ denote the complete binary tree of height $h\ge 1$, rooted at level $0$, with level sets
$L_i=\{v:\operatorname{dist}(v,\text{root})=i\}$. Then $|L_i|=2^i$ and $|V(T_h)|=2^{h+1}-1$.

\begin{theorem}[Domination number and periodic dominion of complete binary trees]\label{thm:binary}
For $h\ge 1$,
\[
\gamma(T_h)=\left\lfloor\frac{2^{h+2}+3}{7}\right\rfloor,
\qquad
\zeta(T_h)=
\begin{cases}
3, & h\equiv 0 \pmod{3}\ \text{and } h\ge 3,\\
1, & \text{otherwise}.
\end{cases}
\]
\end{theorem}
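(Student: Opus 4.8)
The plan is to run the standard three-state tree dynamic program for domination on $T_h$, rooted at its top vertex, exploiting the fact that every subtree at a fixed level of a complete binary tree is again a complete binary tree, so that a \emph{single} triple of data suffices at each height. For $j\ge 0$, let $m_A(j),m_B(j),m_C(j)$ be the minimum size of a subset of $V(T_j)$ dominating every vertex of $T_j$ except possibly the root, in the cases where: ($A$) the root lies in the set; ($B$) the root lies outside the set but has a child in the set; ($C$) the root lies outside the set and has no child in the set; and let $n_A(j),n_B(j),n_C(j)$ count the subsets realizing each minimum. The base case is $(m_A,m_B,m_C)(0)=(1,\infty,0)$ with counts $(1,-,1)$. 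For $j\ge 1$, writing $p_{j-1}=\min\{m_A,m_B,m_C\}(j-1)$, the recurrences are: $m_A(j)=1+2p_{j-1}$ with $n_A(j)$ the square of the total count attaining $p_{j-1}$; $m_C(j)=2m_B(j-1)$ with $n_C(j)=n_B(j-1)^2$; and $m_B(j)$ the minimum of $m_{s_1}(j-1)+m_{s_2}(j-1)$ over $(s_1,s_2)\in\{A,B\}^2\setminus\{(B,B)\}$, with $n_B(j)$ the corresponding sum of products of counts. Since the root of $T_h$ must itself be dominated, $\gamma(T_h)=\min\{m_A(h),m_B(h)\}$ and $\zeta(T_h)$ is the sum of $n_A(h)$ and $n_B(h)$ over whichever of $A,B$ attains that minimum.

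The heart of the proof is a period-$3$ stabilization claim for this triple. First compute the triples for $h\le 3$ by hand, obtaining $(1,\infty,0)$, $(1,2,\infty)$, $(3,2,4)$, $(5,5,4)$ with counts $(1,-,1)$, $(1,1,-)$, $(1,1,1)$, $(1,2,1)$. Then prove by induction on $j\ge 3$ that the triple at level $j$ has, according to $j\bmod 3$, one of three normalized shapes, where $M:=\gamma(T_j)$: for $j\equiv 0$, $(m_A,m_B,m_C)=(M,M,M-1)$ with $(n_A,n_B,n_C)=(1,2,1)$; for $j\equiv 1$, $(M,M+1,M+1)$ with $(1,5,4)$; for $j\equiv 2$, $(M+1,M,M+2)$ with $(1,1,25)$. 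The induction step is a direct substitution into the recurrences, checking the three transitions $(0\to1)$, $(1\to2)$, $(2\to0)$; it simultaneously yields the size recursion $\gamma(T_{j+1})=2\gamma(T_j)+\delta_j$, where $\delta_j=-1,0,+1$ according as $j\equiv 0,1,2\pmod 3$. I expect this step to be the main obstacle: one must track not merely which state is minimal but the exact unit gaps among $m_A,m_B,m_C$, because the case split in the $m_B$-recurrence turns on whether ``one child in $A$, one in $B$'' beats ``both children in $A$'', i.e.\ on the sign of $m_A(j-1)-m_B(j-1)$, and one must correctly add counts across tied states when forming $n_A$ and $n_B$.

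Granting the stabilization claim, the dominion formula is immediate: for $h\ge 3$ with $h\equiv 0\pmod 3$ both $A$ and $B$ are minimal, so $\zeta(T_h)=n_A(h)+n_B(h)=1+2=3$; for $h\equiv 1$ state $A$ is the unique minimal dominated state, so $\zeta(T_h)=n_A(h)=1$; for $h\equiv 2$ likewise $\zeta(T_h)=n_B(h)=1$; and the exceptional small cases $h=1,2$ give $\zeta=1$ from the hand computation. For the domination number, observe that $\gamma(T_h)=2\gamma(T_{h-1})+\delta_{h-1}$ with $\delta$ periodic of period $3$ as above also holds for $h=2,3$ by the hand computation and is anchored by $\gamma(T_1)=1$; solving this linear recurrence with periodic forcing gives a closed form, which one then identifies with $\lfloor(2^{h+2}+3)/7\rfloor$. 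Equivalently — and more cleanly — one verifies that $f(h):=\lfloor(2^{h+2}+3)/7\rfloor$ satisfies the same recurrence and initial value: writing $2^{h+2}+3=7f(h)+\rho_h$ with $\rho_h\in\{0,4,5\}$ for $h\equiv 0,1,2\pmod 3$ (using that $2^h$ has period $3$ modulo $7$), one gets $f(h)-2f(h-1)=(2\rho_{h-1}-\rho_h-3)/7=\delta_{h-1}$ in each residue class, a routine three-case check. This completes the verification of both formulas.
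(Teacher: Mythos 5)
Your proof is correct, and it takes a genuinely different route from the paper. The paper cites the closed form for $\gamma(T_h)$ as classical and proves the dominion statement structurally: it argues that minimum dominating sets may be assumed leaf-free, that any such set must contain all of $L_{h-1}$, and that its restriction to levels $0,\dots,h-3$ is a $\gamma$-set of $T_{h-3}$, yielding the reduction $\zeta(T_h)=\zeta(T_{h-3})$ for $h\ge 4$ together with the base cases $\zeta(T_1)=\zeta(T_2)=1$, $\zeta(T_3)=3$. Your argument instead runs the symmetric three-state domination DP and proves period-$3$ stabilization of the normalized triple $(m_A,m_B,m_C)$ together with its counts; I checked the base triples and all three transitions $(0\to 1)$, $(1\to 2)$, $(2\to 0)$, including the count vectors $(1,2,1)\to(1,5,4)\to(1,1,25)\to(1,2,1)$, and they are all correct, as is the three-case verification that $f(h)=\lfloor(2^{h+2}+3)/7\rfloor$ satisfies $f(h)=2f(h-1)+\delta_{h-1}$. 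Your approach buys two things the paper's does not: it proves the $\gamma$ formula rather than citing it, and it counts \emph{all} minimum dominating sets by construction, whereas the paper's reduction only enumerates leaf-free $\gamma$-sets and leaves implicit the claim that no $\gamma$-set of $T_h$ contains a leaf (the replacement argument shows each $\gamma$-set maps to a leaf-free one, but injectivity of that map onto the counted class is not addressed). The paper's approach, in turn, gives a cleaner structural explanation of \emph{why} the period is $3$ (a levelwise self-similarity of the tree), which the DP obscures inside the algebra of the state transitions.
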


\begin{proof}
The formula for $\gamma(T_h)$ is classical; see, for example,
Haynes--Hedetniemi--Slater~\cite{HaynesHedetniemiSlater1998}.
We therefore focus on the dominion $\zeta(T_h)$.

We first note that minimum dominating sets may be assumed to avoid leaves.
Indeed, let $h\ge 2$ and suppose a $\gamma$-set $D$ contains a leaf
$\ell\in L_h$ with parent $p\in L_{h-1}$.
Since $N[\ell]\subseteq N[p]$, replacing $\ell$ by $p$ preserves both
cardinality and domination.
Iterating this replacement yields a $\gamma$-set $S$ with
\[
S\cap L_h=\varnothing .
\]

Fix such a set $S$.
If a vertex $x\in L_{h-1}$ has two leaf children $\ell_1,\ell_2\in L_h$,
then domination of both leaves forces $x\in S$, as $S$ contains no leaves
and $N[\ell_i]=\{\ell_i,x\}$.
Consequently,
\[
L_{h-1}\subseteq S
\quad\text{for every $\gamma$-set $S$ with $S\cap L_h=\varnothing$.}
\]

Assume now that $h\ge 4$.
Let $T'$ denote the induced subtree on levels $0$ through $h-3$,
which is isomorphic to $T_{h-3}$.
Since the vertices in $L_{h-1}$ dominate all vertices in levels
$h-2$, $h-1$, and $h$, the remaining domination constraints for $S$
are entirely internal to $T'$.

Consider the intersection $S\cap V(T')$.
Every vertex of $T'$ is dominated either internally or by adjacency
within $T'$, and the size satisfies
\[
|S\cap V(T')|
=|S|-|L_{h-1}|
=\gamma(T_h)-2^{h-1}
=\gamma(T_{h-3}),
\]
where the last equality follows from the closed form for $\gamma$.
Thus $S\cap V(T')$ is a minimum dominating set of $T'$.

Conversely, given any $\gamma$-set $S'$ of $T'$, the set
\[
S'\cup L_{h-1}
\]
dominates $T_h$, since $S'$ dominates all vertices of $T'$
and $L_{h-1}$ dominates every vertex in the top three levels.
Moreover, its cardinality is
\[
|S'|+|L_{h-1}|=\gamma(T_{h-3})+2^{h-1}=\gamma(T_h),
\]
so it is a $\gamma$-set of $T_h$.

These two constructions are inverse to one another and establish a
bijection between minimum dominating sets of $T_h$ and those of $T_{h-3}$.
Therefore,
\[
\zeta(T_h)=\zeta(T_{h-3}) \qquad (h\ge 4).
\]

A direct check yields $\zeta(T_1)=\zeta(T_2)=1$ and $\zeta(T_3)=3$.
The stated period-$3$ behavior follows immediately by induction on $h$.
\end{proof}

The three minimum dominating sets of $T_3$ are illustrated in
Figure~\ref{fig:dominion-behaviors}(d).

\subsection{An envelope for dominion under leaf deletions}
We now quantify how much flexibility is introduced when the boundary structure
of a complete binary tree is perturbed.

For $X\subseteq L_h$, write $T_h-X$ for the induced subtree obtained by deleting
the leaves in $X$.
Let
\[
m_1(X)=\#\{\text{parents in }L_{h-1}\text{ with exactly one deleted child}\}.
\]

\begin{theorem}[Dominion envelope under leaf deletions]\label{thm:envelope}
Let $h\ge 2$ and let $X\subseteq L_h$. Then
\[
\zeta(T_h-X)\le 2^{m_1(X)}\,\zeta(T_h).
\]
\end{theorem}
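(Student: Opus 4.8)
The plan is to isolate the flexibility created at the bottom two levels of $T_h-X$ and to show that, after a normalization, it costs at most a factor $2^{m_1(X)}$ relative to the dominion of the undeleted tree. First I would partition $L_{h-1}$ according to how many of the two children of each vertex lie in $X$: let $A$ consist of the parents retaining both leaf children, $B$ of those retaining exactly one (so $|B|=m_1(X)$), and $C$ of those whose children are both deleted. A vertex $p\in A$ still carries two pendant leaves whose closed neighborhoods meet only in $p$, so the cluster argument used in Theorems~\ref{thm:uniform} and \ref{thm:binary} forces $p$ into every minimum dominating set of $T_h-X$; a vertex $p\in B$ carries one pendant leaf $\ell_p$, and minimality (the leaf being redundant if its parent is also chosen, and undominated otherwise) forces exactly one of $p,\ell_p$ into $D$; a vertex $p\in C$ has become a leaf of $T_h-X$ attached to its grandparent in $L_{h-2}$.

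Given a minimum dominating set $D$ of $T_h-X$, I would normalize it by replacing each $\ell_p\in D$ with $p\in B$ by its parent $p$; this is legitimate because $N[\ell_p]\subseteq N[p]$ and $|D|$ is unchanged, and it yields a minimum dominating set $\widehat D$ containing $A\cup B$ and disjoint from the remaining pendants of $B$-parents. The fibers of the map $D\mapsto\widehat D$ have size at most $2^{m_1(X)}$, since a preimage is specified by choosing, for each $p\in B$, which of $p$ or $\ell_p$ it contains. Hence $\zeta(T_h-X)\le 2^{m_1(X)}\,N$, where $N$ counts the normalized minimum dominating sets of $T_h-X$, and it suffices to prove $N\le\zeta(T_h)$.

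For the latter I would reuse the three-level peeling of Theorem~\ref{thm:binary}: the top $h-2$ levels of $T_h-X$ coincide with those of $T_h$ and form a copy $T'\cong T_{h-3}$; the forced vertices $A\cup B$ (together with the handling of $C$ described below) dominate every surviving vertex at levels $h-2,h-1,h$, so for a normalized set $\widehat D$ the trace $\widehat D\cap V(T')$ is a dominating set of $T'$, and the closed form for $\gamma$ shows it is a minimum one. Reversing the construction embeds the minimum dominating sets of $T'$ — equivalently, by Theorem~\ref{thm:binary}, of $T_h$ — into the normalized minimum dominating sets of $T_h-X$, giving $N\le\zeta(T_{h-3})=\zeta(T_h)$ and hence the bound. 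In the special case $|X|=1$ there are no $C$-parents and $m_1(X)=1$, and the same peeling shows $\gamma(T_h-X)=\gamma(T_h)$ and $\zeta(T_h-X)=2\zeta(T_h)$.

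The step I expect to be the main obstacle is the treatment of the $C$-parents: a parent losing both children becomes a new leaf below level $h-2$, and a normalized dominating set of $T_h-X$ may dominate it by itself or by its $L_{h-2}$-grandparent, flexibility with no analogue in $T_h$ that the crude fiber bound does not absorb. The plan is to extend the normalization one further level, pushing each such new leaf up to its $L_{h-2}$-parent, and then to re-run the cluster/forcing analysis at level $h-2$ so that those grandparents are forced as well; one must then verify that this second normalization is compatible with the $\gamma$-count coming from the closed form and introduces no additional uncounted preimages. Carrying this out, and checking the constants, is the crux, since it is precisely here that the interaction between deleted leaves and the rigid periodic structure of $T_h$ must be controlled.
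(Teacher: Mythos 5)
Your overall strategy --- normalize a minimum dominating set of $T_h-X$ by lifting each surviving level-$h$ leaf to its parent, and bound the fibers of this normalization by one binary choice per parent retaining exactly one child --- is the same mechanism the paper uses, and your analysis of the $A$- and $B$-parents, the fiber bound $2^{m_1(X)}$, and the single-leaf case all match the intended argument. The difference is that you are explicit about the class $C$ of parents losing both children, and you correctly flag its treatment as an unresolved crux rather than claiming it is done.

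That gap is genuine, and it cannot be closed along the lines you propose, because the inequality itself fails once $C\neq\varnothing$. Take $h=4$ and $X=L_4$: then $T_4-X=T_3$, every parent loses both children, so $m_1(X)=0$, and Theorem~\ref{thm:binary} gives $\zeta(T_3)=3$ while $2^{m_1(X)}\zeta(T_4)=2^0\cdot 1=1$. The mechanism is exactly the one you isolated: a $C$-parent becomes a leaf at level $h-1$, may be dominated either by itself or by its level-$(h-2)$ parent, and is no longer forced; consequently $\gamma(T_h-X)$ can drop strictly below $\gamma(T_h)$, the trace of a normalized set on $T'\cong T_{h-3}$ need not be a minimum dominating set of $T'$, and the extra freedom appearing at level $h-2$ is not absorbed by $m_1(X)$. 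Your proposed fix --- pushing the new leaves up one more level and re-running the cluster analysis at $L_{h-2}$ --- alters the cardinality bookkeeping and introduces exactly the uncounted preimages you worry about, which is why the count can exceed $2^{m_1(X)}\zeta(T_h)$. Your argument does go through under the additional hypothesis that every vertex of $L_{h-1}$ retains at least one child (equivalently $C=\varnothing$), which covers Corollary~\ref{cor:single-leaf}; beyond that the statement needs such a hypothesis. (The paper's own proof silently asserts that the lifted set $\pi(D)$ dominates $T_h$ and has size $\gamma(T_h)$, both of which fail in the presence of $C$-parents, so your instinct about where the difficulty lies is exactly right.)
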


\begin{proof}
Let $D$ be a minimum dominating set of $T_h-X$.
From $D$, construct a set $\pi(D)\subseteq V(T_h)$ as follows:
whenever $D$ contains a leaf $u\in L_h\setminus X$, replace $u$ by its parent
$p(u)\in L_{h-1}$; all other vertices of $D$ are left unchanged.

Since $N[u]\subseteq N[p(u)]$, this replacement preserves domination,
and hence $\pi(D)$ dominates $T_h$.
Moreover,
\[
|\pi(D)|\le |D|=\gamma(T_h-X)\le \gamma(T_h).
\]
By minimality of $\gamma(T_h)$, it follows that $|\pi(D)|=\gamma(T_h)$,
so $\pi(D)$ is a minimum dominating set of $T_h$.

Fix now a $\gamma$-set $S$ of $T_h$.
Consider a vertex $p\in L_{h-1}$ whose two children are leaves in $T_h$.
If neither child is deleted, or both are deleted, then the domination constraint
at $p$ is forced exactly as in $T_h$.
If, however, exactly one child of $p$ is deleted, then there are at most two
ways for a preimage $D$ of $S$ to realize domination locally at $p$:
either $p\in D$, or (when admissible) $p$ is replaced by its remaining child.

Each such parent contributes at most one independent binary choice, and no
other vertex contributes additional freedom.
Hence, for a fixed $S$, the preimage $\pi^{-1}(S)$ has cardinality at most
$2^{m_1(X)}$.

Summing over all $\zeta(T_h)$ minimum dominating sets of $T_h$ yields
\[
\zeta(T_h-X)\le 2^{m_1(X)}\,\zeta(T_h),
\]
as claimed.
\end{proof}

\begin{corollary}[Sharpness of the envelope bound]\label{cor:single-leaf}
Let $h\ge 2$ and let $\ell\in L_h$ be a leaf of $T_h$. Then
\[
\gamma(T_h-\ell)=\gamma(T_h),
\qquad
\zeta(T_h-\ell)=2\,\zeta(T_h).
\]
\end{corollary}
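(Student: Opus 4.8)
The plan is to deduce both equalities from Theorem~\ref{thm:envelope} together with one explicit construction, after first pinning down the domination number. For $\gamma(T_h-\ell)\le\gamma(T_h)$, I would recall from the proof of Theorem~\ref{thm:binary} that every $\gamma$-set of $T_h$ with $h\ge 2$ must avoid leaves and then contains all of $L_{h-1}$; such a set lies inside $V(T_h-\ell)$ and still dominates $T_h-\ell$, so $\gamma(T_h-\ell)\le\gamma(T_h)$. For the reverse inequality I would feed a $\gamma$-set $D$ of $T_h-\ell$ through the projection $\pi$ built in the proof of Theorem~\ref{thm:envelope}: $\pi(D)$ dominates $T_h$ and $|\pi(D)|\le|D|$, whence $\gamma(T_h)\le|\pi(D)|\le|D|=\gamma(T_h-\ell)$. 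Thus $\gamma(T_h-\ell)=\gamma(T_h)$, and in particular every minimum dominating set of $T_h-\ell$ has size exactly $\gamma(T_h)$.

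The upper bound $\zeta(T_h-\ell)\le 2\,\zeta(T_h)$ is then immediate from Theorem~\ref{thm:envelope} with $X=\{\ell\}$: only the parent $p=p(\ell)$ loses a child, and it loses exactly one, so $m_1(\{\ell\})=1$. For the matching lower bound I would build an injection that doubles the count. Let $\ell'$ be the sibling leaf of $\ell$ (the other child of $p$), let $q\in L_{h-2}$ be the parent of $p$, and let $p'$ be the other child of $q$. Given any $\gamma$-set $S$ of $T_h$ — which is leaf-free with $L_{h-1}\subseteq S$, so $p,p'\in S$ — assign to it the two subsets $S$ and $S_\ell:=(S\setminus\{p\})\cup\{\ell'\}$ of $V(T_h-\ell)$, each of cardinality $\gamma(T_h)=\gamma(T_h-\ell)$. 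The set $S$ dominates $T_h-\ell$ by restriction. For $S_\ell$, the only vertices whose domination could be affected by deleting $p$ are those in $N_{T_h-\ell}[p]=\{p,q,\ell'\}$: here $p$ and $\ell'$ are dominated by $\ell'\in S_\ell$, and $q$ is dominated by $p'\in S_\ell$ (which survives the swap since $p'\ne p$), while every other vertex retains a dominator. Hence $S_\ell$ is also a minimum dominating set of $T_h-\ell$, and $S\ne S_\ell$ since $p\in S\setminus S_\ell$. Finally, distinct $\gamma$-sets $S_1\ne S_2$ yield four distinct sets: $S_i\ni p\notin(S_j)_\ell$ rules out any collision between an unswapped and a swapped set, and $(S_1)_\ell=(S_2)_\ell$ would force $S_1\setminus\{p\}=S_2\setminus\{p\}$, i.e.\ $S_1=S_2$. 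This exhibits $2\,\zeta(T_h)$ distinct minimum dominating sets of $T_h-\ell$, so $\zeta(T_h-\ell)\ge 2\,\zeta(T_h)$, and combined with the envelope bound we get equality.

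The one step that needs genuine care is verifying that $S_\ell$ remains dominating — concretely, that deleting $p$ does not leave the grandparent $q$ undominated. This is exactly where the structural rigidity from the proof of Theorem~\ref{thm:binary}, namely $L_{h-1}\subseteq S$ for every $\gamma$-set $S$, is essential: the sibling $p'$ of $p$ is already forced into $S$ and continues to dominate $q$ after the swap, so no compensating vertex need be added. The boundary case $h=2$ (where $q$ is the root) is covered by the same observation, and can alternatively be confirmed by direct enumeration of the two small families.
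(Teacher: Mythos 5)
Your proof is correct and follows essentially the same route as the paper: the envelope bound with $m_1(\{\ell\})=1$ gives $\zeta(T_h-\ell)\le 2\zeta(T_h)$, and the swap $S\mapsto (S\setminus\{p\})\cup\{\ell'\}$ applied to each $\gamma$-set of $T_h$ (all of which contain $p$) supplies the matching lower bound. Your derivation of $\gamma(T_h)\le\gamma(T_h-\ell)$ via the projection $\pi$ is actually a little more careful than the paper's one-line remark, and your explicit check that the grandparent $q$ remains dominated by $p'$ after the swap fills in a detail the paper leaves implicit.
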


\begin{proof}
Since exactly one parent in $L_{h-1}$ loses a single child, we have $m_1(\{\ell\})=1$.
Theorem~\ref{thm:envelope} therefore yields the upper bound
\[
\zeta(T_h-\ell)\le 2\,\zeta(T_h).
\]

Let $p$ denote the parent of $\ell$ and let $\ell'$ be its sibling.
In the full tree $T_h$, the pair $\{\ell,\ell'\}$ forces $p$ in every minimum dominating set:
if $p\notin S$, then both $\ell$ and $\ell'$ must belong to $S$, contradicting minimality.
Thus $p$ belongs to every $\gamma$-set of $T_h$.

Fix a minimum dominating set $S$ of $T_h$.
In $T_h-\ell$, the vertex $p$ may either remain in the dominating set, or be replaced by $\ell'$,
and both choices yield minimum dominating sets of equal size.
Hence each $\gamma$-set of $T_h$ gives rise to exactly two distinct minimum dominating sets of $T_h-\ell$,
namely
\[
S
\quad\text{and}\quad
(S\setminus\{p\})\cup\{\ell'\}.
\]
This shows $\zeta(T_h-\ell)\ge 2\,\zeta(T_h)$, and equality follows.

Finally, the constructions above preserve cardinality, so $\gamma(T_h-\ell)\le\gamma(T_h)$.
The reverse inequality is immediate, since adding $\ell$ to any dominating set of $T_h-\ell$
produces a dominating set of $T_h$.
Therefore $\gamma(T_h-\ell)=\gamma(T_h)$.
\end{proof}

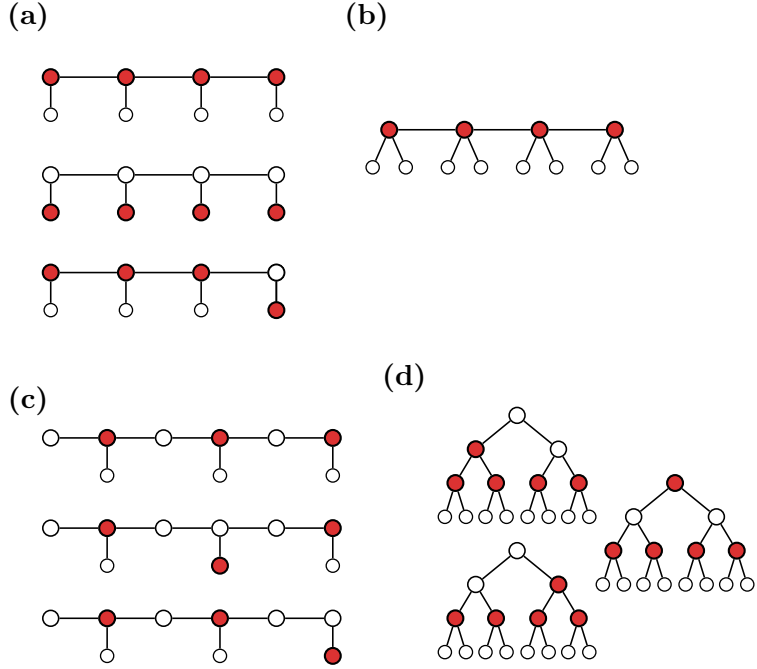
\begin{figure}[t]
\centering
\begin{tikzpicture}[scale=1.0]

% ==================== PANEL (a): Full Comb ====================
\node[font=\bfseries] at (-0.3,4.3) {(a)};

% DS 1
\foreach \x in {0,1,2,3} {
  \node[dom] (v1\x) at (\x*1.0,3.5) {};
  \node[leaf] (l1\x) at (\x*1.0,3) {};
  \draw[edge] (v1\x) -- (l1\x);
}
\foreach \x in {0,1,2} {
  \draw[edge] (v1\x) -- (v1\the\numexpr\x+1);
}

% DS 2
\foreach \x in {0,1,2,3} {
  \node[vertex] (v2\x) at (\x*1.0,2.2) {};
  \node[dom] (l2\x) at (\x*1.0,1.7) {};
  \draw[edge] (v2\x) -- (l2\x);
}
\foreach \x in {0,1,2} {
  \draw[edge] (v2\x) -- (v2\the\numexpr\x+1);
}

% DS 3
\foreach \x in {0,1,2,3} {
  \node[dom] (v3\x) at (\x*1.0,0.9) {};
  \node[leaf] (l3\x) at (\x*1.0,0.4) {};
  \draw[edge] (v3\x) -- (l3\x);
}
\draw[edge] (v30) -- (v31);
\draw[edge] (v31) -- (v32);
\node[vertex] (v33alt) at (3*1.0,0.9) {};
\node[dom] (l33alt) at (3*1.0,0.4) {};
\draw[edge] (v33alt) -- (l33alt);
\draw[edge] (v32) -- (v33alt);

% ==================== PANEL (b): Double Pendants ====================
\node[font=\bfseries] at (4.2,4.3) {(b)};

\foreach \x in {0,1,2,3} {
  \node[dom] (v4\x) at ({4.5+\x*1.0},2.8) {};
  \node[leaf] (l4\x-1) at ({4.5+\x*1.0-0.22},2.3) {};
  \node[leaf] (l4\x-2) at ({4.5+\x*1.0+0.22},2.3) {};
  \draw[edge] (v4\x) -- (l4\x-1);
  \draw[edge] (v4\x) -- (l4\x-2);
}
\foreach \x in {0,1,2} {
  \draw[edge] (v4\x) -- (v4\the\numexpr\x+1);
}

% ==================== PANEL (c): Alternating Comb ====================
\node[font=\bfseries] at (-0.3,-0.8) {(c)};

% DS 1: {v1, v3, v5}
\foreach \x in {0,1,2,3,4,5} {
  \ifnum\x=1 \node[dom] (v5\x) at (\x*0.75,-1.3) {}; \fi
  \ifnum\x=3 \node[dom] (v5\x) at (\x*0.75,-1.3) {}; \fi
  \ifnum\x=5 \node[dom] (v5\x) at (\x*0.75,-1.3) {}; \fi
  \ifnum\x=0 \node[vertex] (v5\x) at (\x*0.75,-1.3) {}; \fi
  \ifnum\x=2 \node[vertex] (v5\x) at (\x*0.75,-1.3) {}; \fi
  \ifnum\x=4 \node[vertex] (v5\x) at (\x*0.75,-1.3) {}; \fi
}
\foreach \x in {1,3,5} {
  \node[leaf] (l5\x) at (\x*0.75,-1.8) {};
  \draw[edge] (v5\x) -- (l5\x);
}
\foreach \x in {0,1,2,3,4} {
  \draw[edge] (v5\x) -- (v5\the\numexpr\x+1);
}

% DS 2: {v1, leaf3, v5}
\foreach \x in {0,1,2,3,4,5} {
  \ifnum\x=1 \node[dom] (v6\x) at (\x*0.75,-2.5) {}; \fi
  \ifnum\x=5 \node[dom] (v6\x) at (\x*0.75,-2.5) {}; \fi
  \ifnum\x=0 \node[vertex] (v6\x) at (\x*0.75,-2.5) {}; \fi
  \ifnum\x=2 \node[vertex] (v6\x) at (\x*0.75,-2.5) {}; \fi
  \ifnum\x=3 \node[vertex] (v6\x) at (\x*0.75,-2.5) {}; \fi
  \ifnum\x=4 \node[vertex] (v6\x) at (\x*0.75,-2.5) {}; \fi
}
\foreach \x in {1,3,5} {
  \ifnum\x=3
    \node[dom] (l6\x) at (\x*0.75,-3.0) {};
  \else
    \node[leaf] (l6\x) at (\x*0.75,-3.0) {};
  \fi
  \draw[edge] (v6\x) -- (l6\x);
}
\foreach \x in {0,1,2,3,4} {
  \draw[edge] (v6\x) -- (v6\the\numexpr\x+1);
}

% DS 3: {v1, v3, leaf5}
\foreach \x in {0,1,2,3,4,5} {
  \ifnum\x=1 \node[dom] (v7\x) at (\x*0.75,-3.7) {}; \fi
  \ifnum\x=3 \node[dom] (v7\x) at (\x*0.75,-3.7) {}; \fi
  \ifnum\x=0 \node[vertex] (v7\x) at (\x*0.75,-3.7) {}; \fi
  \ifnum\x=2 \node[vertex] (v7\x) at (\x*0.75,-3.7) {}; \fi
  \ifnum\x=4 \node[vertex] (v7\x) at (\x*0.75,-3.7) {}; \fi
  \ifnum\x=5 \node[vertex] (v7\x) at (\x*0.75,-3.7) {}; \fi
}
\foreach \x in {1,3,5} {
  \ifnum\x=5
    \node[dom] (l7\x) at (\x*0.75,-4.2) {};
  \else
    \node[leaf] (l7\x) at (\x*0.75,-4.2) {};
  \fi
  \draw[edge] (v7\x) -- (l7\x);
}
\foreach \x in {0,1,2,3,4} {
  \draw[edge] (v7\x) -- (v7\the\numexpr\x+1);
}

% ==================== PANEL (d): Binary Tree T_3 ====================
\node[font=\bfseries] at (4.7,-0.5) {(d)};

% DS 1: {a, c, d, e, f}
\begin{scope}[xshift=6.2cm,yshift=-1.0cm]
  \node[vertex] (r1) at (0,0) {};
  \node[dom] (a1) at (-0.55,-0.45) {};
  \node[vertex] (b1) at (0.55,-0.45) {};
  \node[dom] (c1) at (-0.82,-0.9) {};
  \node[dom] (d1) at (-0.28,-0.9) {};
  \node[dom] (e1) at (0.28,-0.9) {};
  \node[dom] (f1) at (0.82,-0.9) {};
  \node[leaf] (g1) at (-0.96,-1.35) {};
  \node[leaf] (h1) at (-0.68,-1.35) {};
  \node[leaf] (i1) at (-0.42,-1.35) {};
  \node[leaf] (j1) at (-0.14,-1.35) {};
  \node[leaf] (k1) at (0.14,-1.35) {};
  \node[leaf] (l1) at (0.42,-1.35) {};
  \node[leaf] (m1) at (0.68,-1.35) {};
  \node[leaf] (n1) at (0.96,-1.35) {};
  \draw[edge] (r1)--(a1) (r1)--(b1);
  \draw[edge] (a1)--(c1) (a1)--(d1) (b1)--(e1) (b1)--(f1);
  \draw[edge] (c1)--(g1) (c1)--(h1) (d1)--(i1) (d1)--(j1);
  \draw[edge] (e1)--(k1) (e1)--(l1) (f1)--(m1) (f1)--(n1);
\end{scope}

% DS 2: {b, c, d, e, f}
\begin{scope}[xshift=6.2cm,yshift=-2.8cm]
  \node[vertex] (r2) at (0,0) {};
  \node[vertex] (a2) at (-0.55,-0.45) {};
  \node[dom] (b2) at (0.55,-0.45) {};
  \node[dom] (c2) at (-0.82,-0.9) {};
  \node[dom] (d2) at (-0.28,-0.9) {};
  \node[dom] (e2) at (0.28,-0.9) {};
  \node[dom] (f2) at (0.82,-0.9) {};
  \node[leaf] (g2) at (-0.96,-1.35) {};
  \node[leaf] (h2) at (-0.68,-1.35) {};
  \node[leaf] (i2) at (-0.42,-1.35) {};
  \node[leaf] (j2) at (-0.14,-1.35) {};
  \node[leaf] (k2) at (0.14,-1.35) {};
  \node[leaf] (l2) at (0.42,-1.35) {};
  \node[leaf] (m2) at (0.68,-1.35) {};
  \node[leaf] (n2) at (0.96,-1.35) {};
  \draw[edge] (r2)--(a2) (r2)--(b2);
  \draw[edge] (a2)--(c2) (a2)--(d2) (b2)--(e2) (b2)--(f2);
  \draw[edge] (c2)--(g2) (c2)--(h2) (d2)--(i2) (d2)--(j2);
  \draw[edge] (e2)--(k2) (e2)--(l2) (f2)--(m2) (f2)--(n2);
\end{scope}

% DS 3: {r, c, d, e, f}
\begin{scope}[xshift=8.3cm,yshift=-1.9cm]
  \node[dom] (r3) at (0,0) {};
  \node[vertex] (a3) at (-0.55,-0.45) {};
  \node[vertex] (b3) at (0.55,-0.45) {};
  \node[dom] (c3) at (-0.82,-0.9) {};
  \node[dom] (d3) at (-0.28,-0.9) {};
  \node[dom] (e3) at (0.28,-0.9) {};
  \node[dom] (f3) at (0.82,-0.9) {};
  \node[leaf] (g3) at (-0.96,-1.35) {};
  \node[leaf] (h3) at (-0.68,-1.35) {};
  \node[leaf] (i3) at (-0.42,-1.35) {};
  \node[leaf] (j3) at (-0.14,-1.35) {};
  \node[leaf] (k3) at (0.14,-1.35) {};
  \node[leaf] (l3) at (0.42,-1.35) {};
  \node[leaf] (m3) at (0.68,-1.35) {};
  \node[leaf] (n3) at (0.96,-1.35) {};
  \draw[edge] (r3)--(a3) (r3)--(b3);
  \draw[edge] (a3)--(c3) (a3)--(d3) (b3)--(e3) (b3)--(f3);
  \draw[edge] (c3)--(g3) (c3)--(h3) (d3)--(i3) (d3)--(j3);
  \draw[edge] (e3)--(k3) (e3)--(l3) (f3)--(m3) (f3)--(n3);
\end{scope}

\end{tikzpicture}
\caption{Four dominion behaviors in trees. (a)~Full comb $G_4$: $\gamma=4$, 
$\zeta=2^4=16$ (three examples shown). (b)~Double-pendant $G(4,2)$: $\gamma=4$, $\zeta=1$ 
(unique set). (c)~Alternating comb $E_6$: $\gamma=3$, $\zeta=F_4=3$ (all three sets). 
(d)~Binary tree $T_3$: $\gamma=5$, $\zeta=3$ (all three sets). Filled vertices belong to 
dominating sets; hollow vertices are dominated. See Theorems~\ref{thm:uniform}, 
\ref{thm:alt}, and~\ref{thm:binary}.}
\label{fig:dominion-behaviors}
\end{figure}

Since even a single local modification can double the number of optimal solutions without changing
$\gamma$, efficient algorithms are needed to track dominion under perturbations; this motivates the
dynamic programming framework developed in the next section.

\section{Algorithmic aspects}

The structural results established above translate directly into efficient evaluation procedures for
the domination number $\gamma$ and the dominion $\zeta$.
In the path-based families, local forcing leads either to closed-form expressions or to a single
Fibonacci-type recurrence.
For general trees, both parameters can be computed by a standard bottom-up dynamic program with a constant number of local states.

\subsection{Path-based families: closed forms and Fibonacci recurrences}

\begin{theorem}[Evaluation complexity for pendant path families]\label{thm:algo-path}
Let $G$ belong to one of the families $G(n,r)$, $G'_n$, $E_n$, or $O_n$ built from a path on $n$ vertices.
Then $\gamma(G)$ and $\zeta(G)$ can be computed in $O(n)$ time.
Moreover, for $G(n,r)$ and $G'_n$ the values follow in $O(1)$ arithmetic time from $(n,r)$ and $n$ respectively,
and for $E_n$ and $O_n$ the value of $\zeta(G)$ is obtained by evaluating a Fibonacci-type recurrence of
length $\Theta(n)$.
\end{theorem}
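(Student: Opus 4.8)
The plan is to treat this statement as an algorithmic corollary of Theorems~\ref{thm:uniform}, \ref{thm:modified}, and~\ref{thm:alt}, which already supply exact closed forms for $\gamma$ and $\zeta$ on $G(n,r)$ and $G'_n$ and an explicit linear recurrence for $\zeta$ on $E_n$ and $O_n$. The first step is to fix the computational model: since the outputs can be as large as $2^{\gamma}$, the natural setting is the unit-cost arithmetic RAM, where a single operation ($+$, $-$, comparison, or integer exponentiation) on an integer counts as $O(1)$, so that ``$O(n)$ time'' means $O(n)$ such operations. I would note in passing that $\zeta$ can be exponential in $n$, so $\Theta(n)$ is a natural lower bound for any method that writes the answer out explicitly, which is what the ``recurrence of length $\Theta(n)$'' in the statement reflects; if one prefers bit complexity, the same procedures run in $O(n)$ bit operations for the closed-form families and $O(n^2)$ for $E_n$ and $O_n$, with no change to the qualitative picture.

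With the model fixed, the domination number is handled uniformly: Theorem~\ref{thm:uniform} gives $\gamma(G(n,r))=n$, Theorem~\ref{thm:modified} gives $\gamma(G'_n)=\max\{1,n-2\}$, and Theorem~\ref{thm:alt} gives $\gamma(E_n)=\lfloor n/2\rfloor$ and $\gamma(O_n)=\lceil n/2\rceil$, each a constant-size formula in the input parameters evaluated in $O(1)$ arithmetic time. For $\zeta(G(n,r))$ and $\zeta(G'_n)$ I would observe that Theorems~\ref{thm:uniform} and~\ref{thm:modified} present $\zeta$ as a case split — $r=1$ versus $r\ge 2$, respectively $n\in\{2\}$, $n=3$, $n\ge 4$ — so a bounded number of comparisons selects the branch, after which the value is a small constant ($1$ or $2$) or a single power of two ($2^{n}$ or $2^{\gamma-2}$); evaluating such a power is one arithmetic operation in our model. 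This establishes the $O(1)$-arithmetic-time claim for these two families, and in particular the overall $O(n)$ bound.

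For $E_n$ and $O_n$ I would invoke Theorem~\ref{thm:alt}: writing $k=\lfloor n/2\rfloor$, the dominion equals one of $F_{k+1}$, $F_k$, $F_{k+3}$ (for $E_n$) or $F_{k+1}$, $F_{k+3}$ (for $O_n$), and which one is determined from the parity of $n$ in $O(1)$. The remaining work is to compute a single Fibonacci number $F_m$ with $m=\Theta(n)$; running the defining recurrence $F_{t+1}=F_t+F_{t-1}$ from $F_1=F_2=1$ up to $t=m$ performs $\Theta(n)$ additions and hence $\Theta(n)$ arithmetic operations, which is exactly the ``Fibonacci-type recurrence of length $\Theta(n)$'' asserted and gives the overall $O(n)$ bound. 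I would add a brief remark that Binet's formula (Corollary~\ref{cor:phi}) is a closed form but involves irrationals, so for exact integer output one uses the recurrence — or fast doubling, which needs only $O(\log n)$ arithmetic operations — although the linear recurrence already meets the stated bound. If membership in one of the four families must be \emph{recognized} from a graph description rather than supplied, a single $O(n)$ traversal verifying the pendant pattern suffices and does not affect the bound.

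The only part needing real care, rather than a genuine obstacle, is keeping the cost model consistent across the three quantitatively different assertions (``$O(n)$ time'', ``$O(1)$ arithmetic time'', ``recurrence of length $\Theta(n)$''): once one adopts unit-cost integer arithmetic and records that $\gamma$ is $O(1)$ in every family, while $\zeta$ is $O(1)$ for $G(n,r)$ and $G'_n$ and $\Theta(n)$ for $E_n$ and $O_n$, the whole statement is a direct reading-off of Theorems~\ref{thm:uniform}, \ref{thm:modified}, and~\ref{thm:alt}, with the sole non-cosmetic point being the $\Theta(n)$ lower bound forced by the exponential size of $\zeta$.
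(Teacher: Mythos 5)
Your proposal is correct and follows essentially the same route as the paper: read off the closed forms from Theorems~\ref{thm:uniform} and~\ref{thm:modified} for $G(n,r)$ and $G'_n$, and evaluate the Fibonacci recurrence of Theorem~\ref{thm:alt} in $O(n)$ arithmetic steps for $E_n$ and $O_n$. Your added care about the cost model and the remark on fast doubling go beyond what the paper records but do not change the argument.
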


\begin{proof}
Theorems~\ref{thm:uniform} and~\ref{thm:modified} provide explicit formulas for $\gamma$ and $\zeta$
on $G(n,r)$ and $G'_n$, so evaluation reduces to reading the input parameters and applying these formulas.
For $E_n$ and $O_n$, Theorem~\ref{thm:alt} expresses $\zeta(G)$ as a Fibonacci number whose index is linear
in $n$.
Evaluating $F_t$ via the recurrence $F_{t+1}=F_t+F_{t-1}$ requires $O(t)=O(n)$ time and constant auxiliary
storage.
\end{proof}

\subsection{Trees under perturbation: linear-time dynamic programming}

\begin{theorem}[Linear-time dynamic programming on trees]\label{thm:tree-dp}
For any tree $T$ on $N$ vertices, both $\gamma(T)$ and the number of minimum dominating sets of $T$
can be computed in $O(N)$ time.
\end{theorem}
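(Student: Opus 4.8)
The plan is to run a bottom-up dynamic program on a rooted copy of $T$ with a constant number of states per vertex, carrying at each state both an optimal value and a count of optimal partial solutions. Root $T$ at an arbitrary vertex $r$ and fix a post-order of its vertices. For a vertex $v$ with subtree $T_v$, distinguish three states: (A) $v$ belongs to the dominating set; (B) $v$ does not belong to the set but is already dominated by one of its children; (C) $v$ does not belong to the set and is not yet dominated, so it must be dominated later by its parent. For each $v$ and each state $s\in\{A,B,C\}$ I would store a pair $(f_s(v),c_s(v))$: here $f_s(v)$ is the least size of a set $X\subseteq V(T_v)$ that is consistent with state $s$ at $v$ and dominates every vertex of $T_v$ except, in case (C), possibly $v$ itself; and $c_s(v)$ counts such $X$ of size $f_s(v)$. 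At a leaf, $(f_A,c_A)=(1,1)$, $(f_B,c_B)=(\infty,0)$, and $(f_C,c_C)=(0,1)$.

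Next I would record the recurrences at an internal vertex $v$ with children $u_1,\dots,u_d$. A child that is in the set or already dominated imposes nothing on $v$, while a child in state (C) can be repaired only by placing $v$ in the set; hence
\[
f_A(v)=1+\sum_{j=1}^{d}\min\{f_A(u_j),f_B(u_j),f_C(u_j)\},
\qquad
f_C(v)=\sum_{j=1}^{d}f_B(u_j),
\]
while $f_B(v)$ is the minimum of $\sum_{j=1}^{d}\min\{f_A(u_j),f_B(u_j)\}$ over all assignments of each child to state (A) or (B) subject to the constraint that \emph{at least one} child is assigned (A). The counts $c_A(v)$ and $c_C(v)$ are the corresponding products over $j$ of the sums of child-counts attaining the relevant minima. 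For $f_B(v)$ and $c_B(v)$ the ``at least one (A)-child'' requirement is handled by a single left-to-right scan over $u_1,\dots,u_d$ maintaining two running value/count pairs, one over partial assignments that already contain an (A)-child and one over those that do not; each child update is $O(1)$, so the entire computation at $v$ costs $O(d)$ arithmetic operations.

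Finally, at the root $r$ state (C) is infeasible, so $\gamma(T)=\min\{f_A(r),f_B(r)\}$ and the number of minimum dominating sets is $c_A(r)+c_B(r)$, $c_A(r)$, or $c_B(r)$ according to which of $f_A(r),f_B(r)$ attains the minimum. Correctness is an induction over the post-order, reading each recurrence directly off the definition of the three states, and the running time is $\sum_{v}O(1+\deg v)=O(N)$. I expect the only genuinely delicate point to be the state-(B) recurrence together with its count: one must enforce that $v$ is dominated by some child while counting minimizers exactly, without double-counting and without a super-linear blow-up, and it is the two-track scan over the child list that makes this work in $O(d)$ per vertex. (If counts exceed machine precision they are kept as multiple-precision integers or reduced modulo a fixed prime, which does not affect the $O(N)$ bound on arithmetic operations.) This dynamic-programming template for domination on trees is classical; see, e.g., \cite{HaynesHedetniemiSlater1998}.
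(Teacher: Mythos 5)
Your proof is correct and follows essentially the same route as the paper: the identical three-state rooted dynamic program storing a (minimum size, count) pair per state per vertex, combined in postorder and read off at the root. You simply spell out the recurrences, base cases, and the two-track scan for the ``at least one selected child'' constraint in state (B), which the paper leaves implicit.
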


\begin{proof}
Root $T$ at an arbitrary vertex $r$.
For each vertex $v$, let $T_v$ be the induced subtree consisting of $v$ and its descendants.
Three local states are maintained, describing the status of $v$ within $T_v$:
$v$ is selected; or $v$ is not selected and is dominated by at least one child; or $v$ is not selected
and is not dominated within $T_v$ (so it must be dominated by its parent outside $T_v$).

For each state $s$ at $v$, store a pair $(m_s(v),c_s(v))$, where $m_s(v)$ is the minimum size of a dominating
set of $T_v$ realizing state $s$, and $c_s(v)$ counts the sets achieving this minimum.
When processing $v$ in postorder, each child subtree contributes independently, and the feasible combinations
of child states are determined by the adjacency constraints needed to realize $s$ at $v$.
The values $m_s(v)$ are computed by minimizing over these combinations, and $c_s(v)$ is obtained by multiplying
child counts and summing over those combinations that attain the minimum.
Since each edge is examined a constant number of times and each vertex performs constant-time state aggregation,
the total running time is $O(N)$.

At the root $r$, only states in which $r$ is dominated are admissible, so $\gamma(T)$ is the minimum of the
corresponding $m_s(r)$ values, and the number of minimum dominating sets is the sum of the corresponding
$c_s(r)$ values over states attaining that minimum.
\end{proof}

\begin{remark}
For the complete binary tree $T_h$, Theorem~\ref{thm:binary} shows that $\zeta(T_h)\in\{1,3\}$ depends
only on $h\bmod 3$, so evaluation is constant time given $h$.
The dynamic program of Theorem~\ref{thm:tree-dp} is intended for perturbed instances such as $T_h-X$,
where this exact periodicity need not hold.
\end{remark}

\begin{table}[ht]
\centering
\caption{Verification of Theorem~\ref{thm:alt} for small cases.}
\label{tab:verification}
\vspace{.1in}
\begin{tabular}{ccccccccc}
\toprule
\multicolumn{4}{c}{$E_n$ (even attachment)} & & \multicolumn{4}{c}{$O_n$ (odd attachment)} \\
\cmidrule{1-4} \cmidrule{6-9}
$n$ & $k$ & $\gam$ & $\zet$ & & $n$ & $k$ & $\gam$ & $\zet$ \\
\midrule
2 & 1 & 1 & $F_2=1$ & & 2 & 1 & 1 & $F_2=1$ \\
3 & 1 & 1 & $F_1=1$ & & 3 & 1 & 2 & $F_4=3$ \\
4 & 2 & 2 & $F_3=2$ & & 4 & 2 & 2 & $F_3=2$ \\
5 & 2 & 2 & $F_2=1$ & & 5 & 2 & 3 & $F_5=5$ \\
6 & 3 & 3 & $F_4=3$ & & 6 & 3 & 3 & $F_4=3$ \\
7 & 3 & 3 & $F_3=2$ & & 7 & 3 & 4 & $F_6=8$ \\
8 & 4 & 4 & $F_5=5$ & & 8 & 4 & 4 & $F_5=5$ \\
9 & 4 & 4 & $F_4=3$ & & 9 & 4 & 5 & $F_7=13$ \\
10 & 5 & 5 & $F_6=8$ & & 10 & 5 & 5 & $F_6=8$ \\
\bottomrule
\end{tabular}
\end{table}

\begin{table}[ht]
\centering
\caption{Dominion behaviors for tree families.}
\label{tab:summary}
\vspace{.1in}
\begin{tabular}{lcc}
\toprule
Structure & $\gam$ & $\zet$ \\
\midrule
Full comb $G_n$ & $n$ & $2^n$ \\
Interior pendants $G'_n$ ($n\ge4$) & $n-2$ & $2^{n-4}$ \\
Even alternating $E_n$ & $\Theta(n)$ & $F_{\Theta(n)}$ \\
Odd alternating $O_n$ & $\Theta(n)$ & $F_{\Theta(n)}$ \\
Multiple pendants ($r\ge2$ per vertex) & $n$ & $1$ \\
Binary tree $T_h$ & $\lfloor(2^{h+2}+3)/7\rfloor$ & period-$3$ in $h$ \\
\bottomrule
\end{tabular}
\end{table}

\section{Conclusion}

We determined exact values of the domination number $\gamma$ and the dominion $\zeta$ for several
families of trees, combining structural characterizations with efficient evaluation procedures.
Across these families, four qualitatively distinct dominion behaviors arise.
Independent local choices at pendant clusters yield exponential growth with $\zeta=2^{\gamma}$.
Coupled local constraints in alternating combs produce Fibonacci growth,
$\zeta\asymp \varphi^{\gamma}$.
Complete binary trees exhibit a rigid periodic behavior, with $\zeta$ depending only on $h \bmod 3$
despite exponential growth in the number of vertices.
Finally, sufficiently high pendant density enforces complete rigidity, collapsing the dominion to
the unique value $\zeta=1$.

These structural regimes admit direct algorithmic interpretation.
Local forcing and bounded interaction reduce the evaluation of $\gamma$ and $\zeta$ to closed forms
or linear-time dynamic programming, enabling efficient computation even for large instances.
This stands in sharp contrast to the general case, where counting minimum dominating sets is
$\#\mathrm{P}$-complete.
The resulting algorithms support applications in network reconfiguration, facility location under
uncertainty, and distributed consensus, where the dominion quantifies structural flexibility and
robustness.

Table~\ref{tab:summary} summarizes the four dominion behaviors identified here and highlights the
transition between exponential, subexponential, periodic, and rigid enumeration.
For the alternating families $E_n$ and $O_n$, the Fibonacci formulas are supported both theoretically
and computationally, with explicit verification provided in Table~\ref{tab:verification}.

Several directions remain open.
Generalizing Fibonacci-type recurrences to broader classes of sparse pendant patterns may reveal
additional families with subexponential dominion growth.
A sharper characterization of the boundary between bounded and unbounded $\zeta$ in nearly complete
trees would clarify how local forcing scales to global rigidity.
From an algorithmic perspective, extending the linear-time framework to wider classes of graphs,
including bounded-treewidth graphs or graphs admitting local decompositions, would broaden its
applicability.
Finally, understanding dominion behavior under edge deletions or more general local modifications,
beyond leaf removal, may further illuminate the stability and fragility of domination-based
structures in networked systems.

% \subsection*{Acknowledgements}
% The authors thank the anonymous reviewers for their helpful suggestions and careful reading of the manuscript.

\end{document}